\newcommand{\repeattheorem}[1]{%
	\begingroup
	\renewcommand{\thetheorem}{\ref{#1}}%
	\expandafter\expandafter\expandafter\theorem
	\csname reptheorem@#1\endcsname
	\endtheorem
	\endgroup
}
\xdef\csname reptheorem@#1\endcsname{%
		\unexpanded\expandafter{\BODY}%
	}%
\unskip\label{#1}\endtheorem
\newcommand{\NN}{\mathbb{N}}
\newcommand{\ZZ}{\mathbb{Z}}
\newcommand{\QQ}{\mathbb{Q}}
\newcommand{\PrA}{\mathop{\mathbf{PrA}}\nolimits}
\newcommand{\PA}{\mathop{\mathbf{PA}}\nolimits}
\newcommand{\BA}{\mathop{\mathbf{BA}}\nolimits}
\newcommand{\Th}{\mathop{\mathbf{Th}}\nolimits}
\renewcommand{\epsilon}{\varepsilon}
\renewcommand{\phi}{\varphi}
\newcommand{\sref}[2]{\hyperref[#2]{#1 \ref*{#2}}}
\newcommand{\dref}[2]{\hyperref[#2]{ #1 }}
\newcommand{\Ac}{\mathcal{A}}
\newcommand{\eqdef}{\stackrel{\mbox{\tiny\rm def}}{=}}
\newcommand{\ra}{\rightarrow}
\newcommand{\Ra}{\Rightarrow}
\newcommand{\Lra}{\Leftrightarrow}
\newcommand{\lra}{\leftrightarrow}
\spnewtheorem{hyp}{Conjecture}[section]{\bfseries}{\itshape}
\spnewtheorem{ex}{Example}{\bfseries}{\itshape}
\spnewtheorem{stm}{Statement}[section]{\bfseries}{\itshape}
\newcommand{\dotminus}{\mathbin{\text{\@dotminus}}}
\newcommand{\@dotminus}{%
	\ooalign{\hidewidth\raise1ex\hbox{.}\hidewidth\cr$\m@th-$\cr}%
}
\let\blx@rerun@biber\relax
\begin{document}
	\author{Alexander Zapryagaev\thanks{Work of Alexander Zapryagaev is supported by the Russian Science Foundation under grant 21-11-00318.}}
	\title{On Non-Standard Models of B\"uchi Arithmetics}
	\institute{Steklov Mathematical Institute of Russian Academy of Sciences, 8, Gubkina Str., Moscow, 119991, Russian Federation}
	
	\maketitle
	
	\begin{abstract}
		  B\"uchi arithmetics $\BA_n$, $n\ge 2$, are extensions of Presburger arithmetic with an unary functional symbol $V_n(x)$ denoting the largest power of $n$ that divides $x$. We explore the structure of non-standard models of B\"uchi arithmetics and construct an example of a countable non-standard model of $\BA_2$.
	\end{abstract}
	
	\section{Preliminaries}
	
	\begin{definition}
			A \textbf{B\"uchi arithmetic} $\BA_n$ is the theory $\Th(\NN;=,+,V_n)$, $V_n$ is an unary functional symbol such that $V_n(x)$ is the largest power of $n$ that divides $x$ (we set $V_n(0):=0$ by definition).
		\end{definition}
	
        The theories $\BA_n$, $n\ge 2$, are conservative extensions of Presburger arithmetic $\PrA=\Th(\NN;=,+)$. These theories were proposed by R.~B\"uchi in order to describe the recognizability of sets of natural numbers by finite automata through definability in some arithmetic language. Namely, automatic structures are exactly those one-dimensionally, not necessarily with absolute equality, interpretable in $(\NN;=,+,V_n)$. This follows from the definition of an automatic structure \cite[Definition 1.4]{kn} and B\"uchi-Bruy\`ere Theorem \cite{bruyere,bv}.
    
    As shown in the author's dissertation \cite{zapthe} (Theorem 4.3.4), the theories $\BA_n$ are actually mutually interpretable for distinct $n\ge 2$:
    
    \begin{theorem}
		Each $\BA_k$ is interpretable in any of $\BA_l$, $k,l\ge 2$.
    \end{theorem}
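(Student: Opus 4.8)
The plan is to reduce the whole statement to a single construction carried out in the standard model. Since $\BA_k=\Th(\NN;=,+,V_k)$ and $\BA_l=\Th(\NN;=,+,V_l)$ are both complete, it is enough to exhibit $L_{\BA_l}$-formulas $\delta(x)$, $\alpha(x,y,z)$, $\gamma(x,y)$ such that, interpreted in $(\NN;=,+,V_l)$, the set $D$ defined by $\delta$ carries a binary operation $\oplus$ (with graph $\alpha$) and a unary operation $v$ (with graph $\gamma$) for which $(D;=,\oplus,v)\cong(\NN;=,+,V_k)$. Indeed, each relativized translation of a theorem of $\BA_k$, together with the sentences asserting that $\delta$ is non-empty and closed under the two operations and that $\alpha,\gamma$ define total functions on $\delta$, is then an $L_{\BA_l}$-sentence true in $\NN$, hence a theorem of the complete theory $\BA_l$; so these formulas constitute an interpretation, and no reasoning about non-standard models enters this particular proof.

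The naive attempt $D=\NN$ with the identity is hopeless: by Cobham's theorem $V_k$ is in general not $\BA_l$-definable (for $k,l$ multiplicatively independent its range $\{0,1,k,k^2,\dots\}$ is not $l$-automatic, hence not definable), so one must re-encode. I would fix $s\ge 1$ with $l^s\ge k$ and represent $m=\sum_{i\ge 0} d_i k^i$, with base-$k$ digits $d_i\in\{0\sco k-1\}$, by the number $\mathrm{enc}(m):=\sum_{i\ge 0} d_i (l^s)^i$; equivalently, the base-$l$ expansion of $\mathrm{enc}(m)$ is obtained by writing each base-$k$ digit of $m$ as a length-$s$ block of base-$l$ digits. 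The map $\mathrm{enc}$ is injective, and its range is $D=\{n:\text{every base-}l^s\text{ digit of }n\text{ is }<k\}$. Now $D$ is $l^s$-automatic (a two-state automaton suffices), hence $l$-automatic---these classes coincide, a classical fact---hence $\BA_l$-definable by the B\"uchi-Bruy\`ere Theorem.

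It remains to transport $+$ and $V_k$ through $\mathrm{enc}$. For addition, $\mathrm{enc}(m+m')$ arises from $\mathrm{enc}(m)$ and $\mathrm{enc}(m')$ by digit-wise base-$k$ addition with carries, and since $d_i+d'_i+c<2k$ the carry always lies in $\{0,1\}$; hence a finite automaton reading the three base-$l^s$ digit streams in parallel, checking each digit is $<k$ and maintaining the base-$k$ carry, recognizes the graph $\{(\mathrm{enc}(m),\mathrm{enc}(m'),\mathrm{enc}(m+m')):m,m'\in\NN\}$. This graph is therefore $l$-automatic, so $\BA_l$-definable, and it plainly defines a total function $\oplus$ on $D$ taking values in $D$. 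For $V_k$, observe that $V$ depends only on the number of trailing zero digits, and that number is the same whether the common digit string is read in base $k$ or in base $l^s$; hence $\mathrm{enc}(V_k(m))=V_{l^s}(\mathrm{enc}(m))$, so $v:=V_{l^s}|_{D}$ works and $V_{l^s}$ is again $\BA_l$-definable. Finally, one checks that $D$ is closed under $v$ and that $\mathrm{enc}$ is an isomorphism $(\NN;=,+,V_k)\to(D;=,\oplus,v)$, which finishes the construction, uniformly in $k,l\ge 2$.

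The conceptual obstacle is exactly Cobham's theorem: it rules out the obvious interpretation and forces the detour through an encoding. The step that takes care is then the design of that encoding, so that the domain, the image of $+$ and the image of $V_k$ are all $l$-automatic simultaneously. The super-digit encoding above does this precisely because it preserves trailing zeros---which is what collapses $V_k$ to the ambient $V_{l^s}$---and because base-$k$ addition has bounded carry, which is what makes $\oplus$ automaton-computable. (Should one prefer to bypass the B\"uchi-Bruy\`ere Theorem, explicit $\BA_l$-formulas for $D$ and for the graph of $\oplus$ can be written along the lines of the classical definition of the powers-of-$l$ predicate, at the price of heavier bookkeeping.)
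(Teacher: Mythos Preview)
The paper does not actually prove this theorem; it only quotes it from the author's dissertation (Theorem~4.3.4 there), so there is no in-text argument to compare yours against.

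Taken on its own, your proof is correct and is the natural one in the automata-theoretic framing the paper adopts. The reduction ``both theories are complete, so it suffices to build a definable isomorphic copy of $(\NN;=,+,V_k)$ inside $(\NN;=,+,V_l)$'' is sound. The super-digit encoding with $l^s\ge k$ does exactly what is needed: the domain $D=\{n:\text{every base-}l^s\text{ digit of }n\text{ is }<k\}$ is $l^s$-recognizable, hence $l$-recognizable, hence $\BA_l$-definable by B\"uchi--Bruy\`ere; the graph of the transported addition is recognized by a constant-memory automaton because base-$k$ carries stay in $\{0,1\}$; and the identity $\mathrm{enc}(V_k(m))=V_{l^s}(\mathrm{enc}(m))$ holds because $\mathrm{enc}$ preserves the number of trailing zero digits, while $V_{l^s}$ itself is $\BA_l$-definable (its graph is $l^s$-automatic, hence $l$-automatic). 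The only cosmetic remark is that the appeal to Cobham's theorem is motivational rather than logically necessary---you use it to explain why the identity interpretation fails, not as an ingredient of the construction itself.
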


    The author has also established \cite{zap2022}:

    \begin{theorem}\label{mthe}
		Let $\iota$ be a (one- or multi-dimensional) interpretation of $\BA_n$ in $(\NN;=,+,V_n)$. Then the internal model induced by $\iota$ is isomorphic to the standard one.
    \end{theorem}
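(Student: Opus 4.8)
The plan is to argue that the internal model, being definable in $(\NN;=,+,V_n)$, carries an \emph{automatic} presentation, and that an automatic model of $\BA_n$ must be standard; the substance of the argument is in the second half.

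First I reduce to the Presburger reduct. Write $M$ for the internal model induced by $\iota$; it is countable, since its domain is a definable quotient of a definable subset of some $\NN^k$, and $M\models\BA_n$. It suffices to prove that the reduct $(M;\oplus)$, where $\oplus$ is the interpretation of $+$, is isomorphic to $(\NN;+)$: given any isomorphism $h\colon(M;\oplus)\to(\NN;+)$, transport $V_n^{M}$ along $h$ to obtain a function $\bar V$ with $(\NN;+,\bar V)\models\BA_n$; then for every $m\in\NN$ the closed equation $V_n(\underline m)=\underline{V_n(m)}$, with $\underline m$ the numeral $\underbrace{1+\cdots+1}_{m}$ (and $\underline 0:=0$), is true in the standard model, hence lies in $\BA_n$, hence holds in $(\NN;+,\bar V)$, which forces $\bar V(m)=V_n(m)$; as $m$ was arbitrary, $\bar V=V_n$ and $h\colon M\cong(\NN;=,+,V_n)$.

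By the B\"uchi--Bruy\`ere theorem every relation definable in $(\NN;=,+,V_n)$ is $n$-automatic, and, as recalled in the Preliminaries, the automatic structures are exactly those interpretable in $(\NN;=,+,V_n)$; hence $M$, and with it $(M;\oplus)$, has an automatic presentation. So suppose, for contradiction, that $(M;\oplus)$ is automatic but non-standard, and fix an automatic presentation with domain a regular language $L$. The contradiction should come from two opposing features of automatic presentations. On the one hand $L$ has only (at most) exponentially many words of each given length, while every automatic function changes the length of a word by at most an additive constant; in particular this holds for the $\oplus$-definable Presburger successor $s$ of $M$. On the other hand, over an automatic structure the quantifier ``there are infinitely many'' can be eliminated in favour of automatic predicates, which is a consequence of exactly that growth phenomenon. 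Combining these, the set $\{y : y\text{ has only finitely many }\preceq\text{-predecessors}\}$ — visibly the standard cut $\NN\subseteq M$ — would be definable over the presentation. But $M\models\BA_n$ satisfies every first-order instance of induction valid in the standard model, so if a formula $\psi$ defined the standard cut, then $\psi(0)$ and $\forall x(\psi(x)\to\psi(sx))$ would hold and induction would give $\forall x\,\psi(x)$, contradicting non-standardness.

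The main obstacle is this last step, and precisely the point at which it is delicate: turning ``automatic presentations cannot grow too fast'' into a definition of the standard cut \emph{in a language for which the induction instances of $\BA_n$ still apply} — equivalently, a clean proof that a non-standard model of Presburger arithmetic cannot have an automatic presentation whose standard cut is recognised by a finite automaton. The reduction and the B\"uchi--Bruy\`ere input are routine by comparison.
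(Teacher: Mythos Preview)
The paper does not prove this theorem; it is quoted from the author's earlier work \cite{zap2022}, so there is no in-paper argument to compare your proposal against. I can only assess the proposal on its own.

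Your opening moves are fine: the internal model $M$ is automatic (anything interpretable, in any dimension, in an automatic structure is again automatic), and if its additive reduct were $(\NN;+)$ then the interpretation of $V_n$ would indeed be forced by the complete theory. But the step you yourself flag as the ``main obstacle'' is a genuine gap, not a missing detail. Eliminating $\exists^{\infty}$ over an automatic presentation yields a \emph{regular} predicate for the standard cut --- one definable from the presentation, not a $\{=,+,V_n\}$-formula over $M$. The induction instances lying in $\BA_n$ are only those whose hypothesis $\psi$ is a $\{=,+,V_n\}$-formula; there is no reason a model of $\BA_n$ should satisfy induction for arbitrary regular predicates attached to some particular coding of it, and in general it does not. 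For a concrete witness: $(\NN+\ZZ;<,S)$ is automatic, is elementarily equivalent to $(\NN;<,S)$ and hence satisfies every $\{<,S\}$-induction instance, and its standard cut is a regular set --- yet no contradiction arises, precisely because that regular set is not $\{<,S\}$-definable. Your argument stalls at exactly the analogous point.

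After your reductions, what remains to prove is the bare statement that no non-standard model of $\BA_n$ is automatic, which is equivalent to the theorem itself (this is exactly the Corollary the paper derives \emph{from} \sref{Theorem}{mthe}). The length-growth heuristic you sketch does not establish it; the proof in \cite{zap2022} requires substantially more structural input about such models.
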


    Note that this partially answers a question by A.~Visser \cite{vf}.

    The claim of \sref{Theorem}{mthe} could be restated in the following way:

    \begin{corollary}
        There are no automatic non-standard models of $\BA_n$. 
    \end{corollary}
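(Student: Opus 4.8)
The plan is to read the corollary as an immediate consequence of \sref{Theorem}{mthe} once the definition of an automatic structure is unwound. So suppose $\Mc$ is an automatic model of $\BA_n$, and aim for a contradiction with non-standardness. By the characterization recalled above --- Definition~1.4 of \cite{kn} together with the B\"uchi--Bruy\`ere theorem \cite{bruyere,bv} --- a structure is automatic exactly when it is one-dimensionally interpretable, not necessarily with absolute equality, in $(\NN;=,+,V_n)$. Thus $\Mc$ comes equipped with a one-dimensional interpretation $\iota$ in $(\NN;=,+,V_n)$: a definable domain $D\subseteq\NN$, a definable equivalence relation $\sim$ on $D$ playing the role of equality, and definable, $\sim$-invariant relations realizing $+^{\Mc}$ and $V_n^{\Mc}$. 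By construction, $\Mc$ is isomorphic to the internal model induced by $\iota$.

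Next I would apply \sref{Theorem}{mthe} to this very $\iota$. Since $\Mc\models\BA_n$, the interpretation $\iota$ is in particular a (one-dimensional, hence also multi-dimensional) interpretation of $\BA_n$ in $(\NN;=,+,V_n)$; the theorem then says its internal model is isomorphic to the standard model $(\NN;=,+,V_n)$. Chaining the two isomorphisms yields $\Mc\cong(\NN;=,+,V_n)$, so $\Mc$ is, up to isomorphism, the standard model and is therefore not non-standard --- the desired contradiction.

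I do not expect a genuine obstacle here; the content of the corollary is entirely carried by \sref{Theorem}{mthe}. The one point deserving a careful word is the passage between the automata-theoretic notion of an automatic presentation (regular languages and regular relations on strings over a fixed finite alphabet) and the model-theoretic notion of interpretability in $(\NN;=,+,V_n)$; this dictionary is precisely the B\"uchi--Bruy\`ere theorem, with the base $n$ either matched to the alphabet size or changed freely thanks to the mutual interpretability of the $\BA_k$ stated above. It is also worth remarking explicitly that being automatic does not depend on the chosen base, so there is no mismatch between the $n$ of the presentation and the $n$ of the theory $\BA_n$ of which $\Mc$ is assumed to be a model.
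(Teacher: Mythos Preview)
Your proposal is correct and follows exactly the paper's own argument: unwind ``automatic'' as ``one-dimensionally interpretable in $(\NN;=,+,V_n)$'' via the B\"uchi--Bruy\`ere characterization, then apply \sref{Theorem}{mthe} to force the internal model to be standard. The paper's proof is a one-sentence version of what you wrote; your additional remarks on base-independence are fine but not needed.
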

    \begin{proof}
        If a non-standard model $\Ac$ of $\BA_n$ was automatic, there would exist a one-dimensional interpretation of $\Ac$ in $(\NN;=,+,V_n)$.
    \end{proof}

    In this article, we explore the structure of non-standard models of $\BA_n$ and aim to construct an explicit example of a countable non-standard model of $\BA_2$.
    
    \section{Basic axioms}

    It is well-known that $\PrA$ in the extended language $\{=,+,<,0,1,\{\equiv_n\}_{n=2}^\infty\}$ has an equivalent axiomatic definition, as the first-order theory given by the following recursive set of axioms ($\underline{n}\eqdef1+\ldots+1$ $n$~times) \cite[Appendix]{hume}:
			
    \begin{enumerate}
		\item $x=0\lra\forall y\:(x+y=y)$
		\item $x<y\lra\exists z\:((x+z=y)\wedge\neg(z=0))$
        \item $x=1\lra 0<x\wedge\neg\exists z\:(0<z\wedge z<x)$
		\item $x\equiv_n y\lra\exists u\:(x=\underline{n}u+y\vee y=\underline{n}u+x)$
		\item $\neg(x+1=0)$
		\item $x+z=y+z\ra x=y$
		\item $(x+y)+z=x+(y+z)$
		\item $x=0\vee\exists y\:(x=y+1)$
		\item $x+y=y+x$
		\item $x<y\vee x=y\vee y<x$
		\item $(x\equiv_n 0)\vee(x\equiv_n 1)\vee\ldots\vee(x\equiv_n\underline{n-1})$ (schema)
    \end{enumerate}

        Unlike $\PA$, where it is impossible to produce an explicit non-standard model by defining addition and multiplication recursively \cite{tennenbaum}, examples of non-stan\-dard models of $\PrA$ can be given explicitly \cite{smorynski}, \cite{lipshitz}.
	
	\begin{ex}[Recursive non-standard model]\label{ns}
		Consider all tuples $$\{(p,q)\mid p\in\QQ\ge 0,q\in\ZZ,p=0\Ra q\in\NN\}$$ with addition defined by component. This structure fulfills all the axioms of $\PrA$, yet its order type is isomorphic to $\NN+\ZZ\cdot\QQ$.
	\end{ex}

        As we will show, however, it is impossible to introduce the function $V_n$ in the model above so that it becomes a (non-standard) model of $\BA_n$.

        The values of $V_2$ (in the case of $\BA_2$, for $\BA_n$ the definition is similar) can be obtained inductively in the standard model:
        
        \begin{enumerate}
            \setcounter{enumi}{11}
            \item $V_2(x)=0\lra x=0$
            \item $\neg\exists t\:(t+t=x)\ra V_2(x)=1$
            \item $\exists t\:(t+t=x)\ra V_2(x)=V_2(t)+V_2(t)$
        \end{enumerate}

        However, axioms $(1)-(14)$ are not sufficient to describe $\BA_2$. The following conditions, all true in $\BA_2$, cannot be proved based on the axioms above, which can be shown by constructing counter-models explicitly:
        
        \begin{enumerate}
            \setcounter{enumi}{14}
            \item $\forall x\:\exists y\:(y>x\wedge V_2(y)=y)$ ``after each number, there is a power of $2$"
            \item $\forall x\:(V_2(x)=x\ra(\forall y\:(x<y<x+x\ra V_2(y)<y)))$ ``between a power of $2$ and its double, there are no more powers of $2$"
            \item $\forall x(V_2(x)=x\ra\neg\exists y(\underbrace{y+...+y}_\text{$n$ times}=x))$, $n=3,5,7,9,11,\ldots$ ``no power of $2$ is divisible by any odd number" (schema)
        \end{enumerate}        

        We suggest:
        
        \begin{conjecture}
            The axioms and schemata $(1)-(17)$ axiomatize $\BA_2$.
        \end{conjecture}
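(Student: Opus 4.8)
The plan is to establish completeness of the theory $T$ axiomatised by $(1)$–$(17)$: since $\NN$ is a model of $T$ and $\BA_2$ is by definition the complete theory $\Th(\NN;=,+,V_2)$, completeness of $T$ yields $T=\BA_2$. Two routes suggest themselves. The syntactic one — formalise the B\"uchi--Bruy\`ere automata decision procedure inside $T$ and prove, for each sentence, the computation that decides it — is unappealing, since the Presburger fragment of $T$ has no induction and cannot code automaton runs comfortably. The one I would follow is model-theoretic: show that any two models $\Ac,\Bc\models T$ are elementarily equivalent, via an Ehrenfeucht--Fra\"iss\'e argument grafted onto the classical quantifier elimination for Presburger arithmetic in the language $\{+,<,0,1,(\equiv_k)_k\}$.

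The first task is to recover the shape of $V_2$ inside an arbitrary $\Ac\models T$. Writing $P(x)$ for ``$V_2(x)=x$'', axioms $(13)$–$(14)$ give that $P$ is closed under doubling and that every nonzero $p\in P$ with $p\neq 1$ equals $2q$ for a unique nonzero $q\in P$; so, ordered by $<$, the nonzero part of $P^{\Ac}$ is one $\omega$-chain $1,2,4,\ldots$ (the standard powers, with doubling as successor) together with a set of disjoint $\ZZ$-chains of the same kind, which by $(15)$ is nonempty and cofinal precisely when $\Ac$ is non-standard, while $(16)$ forces $p_{i+1}=2p_i$ along each chain. One further needs: $V_2(x)\in P$ for all $x$; the ultrametric law $V_2(x+y)\ge\min(V_2(x),V_2(y))$, with equality when $V_2(x)\neq V_2(y)$; additive rigidity of $P$ (e.g. $p+q=r+s$ with $p,q,r,s\in P$ forces $\{p,q\}=\{r,s\}$, which follows from the discreteness of $P$ and Presburger arithmetic); and a description of $V_2$ on an arbitrary $x$ by iterating $V_2$ on $x-V_2(x)$. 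For standard elements each of these is a matter of finitely many uses of $(12)$–$(14)$; the work is to see how far they propagate along the floating $\ZZ$-chains from $(1)$–$(17)$ alone.

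Granting these lemmas, I would play the EF game directly: for each $n$, a winning strategy for Duplicator in the $n$-round game on $\Ac,\Bc$. In spirit this is a relative quantifier elimination — every $\{+,V_2\}$-formula of quantifier rank $\le n$ ought to be equivalent over $T$ to a Presburger condition on the variables together with the auxiliary terms $V_2(x_i)$, $V_2(x_i-V_2(x_i)),\ldots$ to depth $n$ and the predicates $P(\cdot)$ — but it is cleaner not to fix a finite quantifier-elimination language and instead have Duplicator maintain, move by move, a partial isomorphism of the Presburger reducts that additionally matches all $V_2$-iterates of the marked elements to the current depth, respects $P$, and preserves the bounded digit data linking them. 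The structural lemmas are precisely what should allow such a position to be extended by one move once the classical Presburger EF analysis absorbs the purely additive content; started from the inclusion of the standard part this also yields $\NN\preceq\Ac$ for every model, so all models of $T$ are elementarily equivalent to $\NN$.

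The decisive difficulty — and a point at which the conjecture may well need strengthening — arises in the move where Spoiler plays a non-standard power $p\in P^{\Ac}$ lying in a fresh $\ZZ$-chain and Duplicator must respond with a $q\in P^{\Bc}$ carrying the same relations to the earlier elements, \emph{including congruences}. Along a $\ZZ$-chain, passing to the double multiplies the residue modulo an odd $k$ by $2$, so the residues of a chain run through a single coset of $\langle 2\rangle$ in $(\ZZ/k)^{*}$; but $(17)$ only excludes the residue $0$, and I do not see how $(1)$–$(17)$ could force a floating chain into the coset $\langle 2\rangle$ itself. Concretely, nothing among $(1)$–$(17)$ appears to rule out a ``power of $2$'' that is $\equiv_{7}3$, whereas $\BA_2$ proves every power of $2$ is $\equiv_{7}$ one of $1,2,4$; if this is right, the conjecture as stated fails and must be supplemented with the (true, $\BA_2$-provable) schema $\forall x\,(V_2(x)=x\to\bigvee_{j=0}^{m}x\equiv_m\underline{2^{j}})$ for every $m$ — equivalently, that $V_2$ is eventually periodic modulo $m$. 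The genuine target is thus the strengthened conjecture, and once this congruence obstruction is removed the remaining task is to check that $(1)$–$(17)$ plus the schema determine the interaction of the $\ZZ$-chains of $P$ with $+$ and with the $\equiv_k$ tightly enough to close the EF argument above.
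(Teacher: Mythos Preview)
The paper gives no proof of this statement: it is presented as a \emph{Conjecture}, and the only supporting remark is that each of $(15)$--$(17)$ is independent of the earlier axioms. There is thus nothing on the paper's side against which to compare your argument.

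Your analysis goes beyond the paper, and the decisive point you raise appears to \emph{refute} the conjecture rather than prove it. Axioms $(1)$--$(17)$ constrain the residue of a non-standard power of $2$ modulo an odd $m$ only by excluding $0$ (schema $(17)$), whereas $\BA_2$ proves that every positive power of $2$ lies in the subgroup $\langle 2\rangle\le(\ZZ/m\ZZ)^{\times}$; for $m=7$ this is the sentence $\forall x\,\bigl(V_2(x)=x\wedge x>0\to x\equiv_7 1\vee x\equiv_7 2\vee x\equiv_7 4\bigr)$, true in $\NN$ and hence a theorem of $\BA_2$. Nothing in $(1)$--$(17)$ blocks a non-standard $\ZZ$-chain of powers of $2$ whose residues modulo $7$ cycle through $\{3,6,5\}$ instead of $\{1,2,4\}$, and a construction along the lines of Section~4 --- choosing the non-standard power $c$ with $c\equiv 3\pmod 7$ rather than $c\equiv 1$ --- should produce a model of $(1)$--$(17)$ witnessing this. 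So the conjecture is almost certainly false as stated, and your proposed schema $\forall x\,(V_2(x)=x\to\bigvee_{j}x\equiv_m\underline{2^j})$ for each $m$ is the natural repair. The EF programme you sketch for the repaired system is a reasonable plan but remains only that: the structural lemmas you invoke (closure $V_2(x)\in P$, the ultrametric law, additive rigidity of $P$) each need a derivation from the axioms without induction, and the Duplicator strategy against a Spoiler move into a fresh $\ZZ$-chain must be written out in full before one can tell whether even the repaired list is complete.
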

        
    \section{Structure of non-standard models}

    In this section, we will establish basic facts about non-standard models of B\"uchi arithmetic.

    First, we describe their order-type. Any model of a B\"uchi arithmetic starts with the elements $0,1,\{\underline{n}\}_{n=2}^\infty$, which we will call \emph{standard (natural) numbers}. All the remaining elements of a model will be called \emph{non-standard numbers}.

    \begin{stm}[after \cite{kemeny}]
        Any non-standard model $\Ac\models\BA_n$ has the order type $\NN+\ZZ\cdot A$, where $\langle A,<_A\rangle$ is some dense linear order without endpoints. In particular, any countable non-standard model of $\BA_n$ has the order type $\NN+\ZZ\cdot\QQ$.
    \end{stm}
    \begin{proof}
        Consider the equivalence relation: $a\sim b$ iff $|b-a|$ is a standard natural number. Each non-standard element $a$ of $\mathfrak{A}$ belongs to a subset $[a]\eqdef\{b\mid a\sim b\}$ isomorphic to $\ZZ$. We will call such fragments \emph{galaxies}. We can introduce the ordering on galaxies induced from the original ordering: $$[a]<[b]\Lra (a<b\wedge [a]\neq[b]).$$ This ordering is linear. We show its denseness and absence of endpoints by giving an example of a point to the left, right, and between any two non-standard galaxies $[a]<[b]$. For simplicity, assume $a$ and $b$ are even, because otherwise we may replace them with $a-1$ and $b-1$ respectively:

        \begin{itemize}
            \item $[a+a]>[a]$, as $a+a>a$, and $(a+a)-a=a$ is a non-standard number;
            \item $\left[\frac{a}{2}\right]<[a]$, as $a>\frac{a}{2}$, and $a-\frac{a}{2}=\frac{a}{2}$ is a non-standard number;
            \item if $a<b$, then $[a]<\left[\frac{a+b}{2}\right]<[b]$, as $a<\frac{a+b}{2}<b$, and both $\frac{a+b}{2}-a$ and $b-\frac{a+b}{2}$ are non-standard numbers.
        \end{itemize}
    \end{proof}

    Thus, for a non-standard model of $\BA_n$ its order is isomorphic to $\NN+\ZZ\cdot\QQ$. Furthermore, it is possible to introduce an induced addition on galaxies.

    \begin{lemma}
        The addition on galaxies defined as $[a]+[b]=[a+b]$ is well-defined.
    \end{lemma}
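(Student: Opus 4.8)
The statement asserts that $[a]=[a']$ and $[b]=[b']$ imply $[a+b]=[a'+b']$, i.e.\ that the equivalence relation ``$a\sim b$ iff $|b-a|$ is a standard number'' is a congruence for $+$. I would split the proof into two parts: \emph{(i) translation invariance}, that $x\sim y$ implies $x+c\sim y+c$ for every $c$; and \emph{(ii) transitivity of $\sim$} (which is already used implicitly in the preceding Statement, where the $\sim$-classes are identified with copies of $\ZZ$). Given both, the lemma is immediate: (i) applied to $a\sim a'$ with $c=b$ gives $a+b\sim a'+b$, (i) applied to $b\sim b'$ with $c=a'$ gives $a'+b\sim a'+b'$, and (ii) chains these into $a+b\sim a'+b'$, that is, $[a+b]=[a'+b']$.

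For (i), observe that since the model carries only $+$, the informal quantity ``$|y-x|$'' must be read as the unique element $s$ with $x+s=y$ or $y+s=x$ — it exists by trichotomy (axiom (10)) and is unique by cancellation (axiom (6)) — and $x\sim y$ means this $s$ is some $\underline m$. If $x+\underline m=y$ then associativity and commutativity give $(x+c)+\underline m=y+c$, so $x+c$ and $y+c$ again have the standard difference $\underline m$; the case $y+\underline m=x$ is symmetric. Hence $x+c\sim y+c$.

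For (ii), let $a\sim b$ and $b\sim c$ with standard witnesses $\underline m,\underline k$. One runs the four cases according to which of $\{a,b\}$ and of $\{b,c\}$ is the larger element. In each case the two witnessing equations combine, using associativity, commutativity, cancellation, and the finite $\PrA$-identities $\underline m+\underline k=\underline{m+k}$ and $\underline m=\underline k+\underline{m-k}$ (for $m\ge k$), into a single equation that exhibits for $a\sim c$ a witness of the form $\underline j$, hence standard. Together with reflexivity ($a+\underline 0=a$) and the evident symmetry, this shows $\sim$ is an equivalence relation and finishes the proof. I do not anticipate a real obstacle: everything rests on the cancellation law and the closure of the standard numbers under addition; the only thing requiring attention is the bookkeeping — faithfully replacing the earlier proof's informal subtraction by the order-plus-cancellation substitute, and carrying out the four-way case split for transitivity without sign errors.
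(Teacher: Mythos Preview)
Your argument is correct and follows the same idea as the paper's proof: both show that if $a\sim a'$ and $b\sim b'$ then $a+b\sim a'+b'$, using closure of the standard numbers under addition. The paper's version is much terser---it simply writes $c'=c+k$, $d'=d+l$ with $k,l$ standard and observes $(c'+d')=(c+d)+(k+l)$---implicitly treating $k,l$ as signed standard integers and thereby absorbing into one line the translation-invariance, transitivity, and sign case-splits that you carefully spell out.
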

    \begin{proof}
        Let $c'=c+k$, $d'=d+l$, where $k$ and $l$ are standard numbers. Then $(c'+d')=(c+d)+k+l$ differs from $c+d$ by a standard number $k+l$ and thus belongs to the same galaxy $[c+d]$. Hence, the result of addition does not depend on the choice of the representative element in the galaxy.
    \end{proof}

    We note that it does not necessarily follow that the additive monoid of a countable non-standard model of $\BA_n$ would necessarily be isomorphic to $\NN+\ZZ\cdot\QQ$. Indeed, even though the \emph{order} structure of the galaxies is isomorphic to $\QQ_{\ge 0}$ by Cantor's theorem, this does not mean that the \emph{additive} structure of those coincides with $\QQ_{\ge 0}$.
    
    For example, the non-negative real algebraic numbers, being countable, are order-isomorphic to $\QQ_{\ge 0}$ out of necessity, but not additively isomorphic, as there is no element that generates all positive elements only through multiplying and dividing it by standard natural numbers. However, we intend to construct a model of $\BA_2$ that indeed has the monoid of galaxies isomorphic to $\QQ_{\ge 0}$.

    \section{Recovering the structure of $V_2$}

    Now we will concentrate on the B\"uchi arithmetic $\BA_2$, studying the structure of its countable non-standard models. We introduce the following definition.

    \begin{definition}
        A non-standard natural number $x$ is called a \emph{hypernumber} if $V_2(x)$ is non-standard.
    \end{definition}

    We note that:

    \begin{lemma}
        A number $x$ is a hypernumber iff it is divisible by all standard powers of $2$.
    \end{lemma}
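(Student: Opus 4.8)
The plan is to prove both implications directly from the inductive characterization of $V_2$ given by axioms $(12)$–$(14)$, which hold in any model of $\BA_2$. First I would establish the easy direction: if $x$ is a hypernumber, then $V_2(x)$ is non-standard, so in particular $V_2(x) \ge \underline{2}^k$ for every standard $k$. Since $V_2(x)$ is by definition a power of $2$ dividing $x$, and $V_2(x) \ge \underline{2}^k$ forces $\underline{2}^k \mid V_2(x) \mid x$ (here I would invoke that in $\BA_2$ the values of $V_2$ are totally ordered by divisibility among powers of $2$, a fact provable from the axioms), we conclude $x$ is divisible by all standard powers of $2$.

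For the converse, suppose $x$ is divisible by every standard power of $2$; I want to show $V_2(x)$ is non-standard. I would argue by contradiction: assume $V_2(x) = \underline{2}^k$ for some standard $k$. By the definition of $V_2$ as the \emph{largest} power of $2$ dividing $x$, this means $\underline{2}^{k+1}$ does not divide $x$. But $\underline{2}^{k+1}$ is a standard power of $2$, contradicting the hypothesis. The only subtlety is ensuring that "$V_2(x)$ is standard" genuinely implies "$V_2(x) = \underline{2}^k$ for some standard $k$": this holds because a standard number that is a power of $2$ must be one of $1, \underline{2}, \underline{4}, \ldots$, which follows from axioms $(12)$–$(14)$ applied in the standard initial segment, or more directly from the fact that the standard powers of $2$ are exactly $\{\underline{2}^k : k \in \NN\}$ and these are cofinal among standard numbers below any given one.

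The main obstacle I anticipate is making precise, within the first-order theory $\BA_2$ (as opposed to appealing to the standard model), the statement "$V_2(x)$ is the largest power of $2$ dividing $x$" and the divisibility-comparability of powers of $2$. These are semantic facts about the intended model, but in a non-standard $\Ac \models \BA_2$ we only have access to them insofar as they are first-order consequences of $\Th(\NN;=,+,V_2)$. I would handle this by noting that the sentences "$\forall x\, (V_2(x) \mid x)$", "$\forall x\, \forall d\, ((d \mid x \wedge d \text{ is a power of } 2) \ra d \mid V_2(x))$", and "$\forall x\, (V_2(x) = x \lra x \text{ is a power of } 2)$" — suitably formalized using divisibility, which is Presburger-definable for a fixed modulus but here needs the schema over all standard powers — are all true in $\NN$ and hence belong to $\BA_2$, so they hold in $\Ac$. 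Once these are in hand, both directions of the lemma are immediate as sketched above, and the argument is otherwise routine.
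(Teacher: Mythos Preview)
Your proposal is correct and follows essentially the same reasoning as the paper: both directions hinge on the observation that $V_2(x)$ being standard forces $V_2(x)=\underline{2}^k$ for some standard $k$, whence $\underline{2}^{k+1}\nmid x$. The paper phrases both implications as contrapositives while you argue one directly and one by contradiction, but the mathematical content is identical; your extra care in justifying why the relevant properties of $V_2$ (that it divides $x$, is a power of $2$, and is maximal among such) transfer to non-standard models via $\BA_2=\Th(\NN;=,+,V_2)$ is simply more explicit than the paper, which uses these facts without comment.
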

    \begin{proof}
        Assume $x$ is not a hypernumber, that is, the value of $V_2(x)$ is standard. It is divisible by $2^k$ but not by $2^{k+1}$ for such $k$ such that $V_2(x)=2^k$. Hence, it is not divisible by all standard powers of $2$.

        Vice versa, if $x$ is not divisible by all standard powers of $2$, there is such a minimal standard $k$ that $2^k$ divides $x$ but $2^{k+1}$ does not. For each power of $2$ larger than $2^{k+1}$, it is a multiple of $2^{k+1}$ and thus also does not divide $x$. Hence, $V_2(x)=2^k$ and is standard.
    \end{proof}

    In a countable non-standard model of $\BA_2$, some non-standard numbers must be hypernumbers. Indeed, recall axiom $(15)$: $$\forall x\:\exists y\:(y>x\wedge V_2(y)=y\text{ ``after each number, there is a power of $2$"}.$$

    However, a non-standard number can be a power of $2$ (that is, be equal to its own value of $V_2$) only if it is a hypernumber.

    \begin{lemma}
        There is no more that one hypernumber in a given galaxy.
    \end{lemma}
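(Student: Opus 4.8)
The plan is to argue by contradiction, leaning on the characterization of hypernumbers provided by the preceding lemma. Suppose $x$ and $y$ are two \emph{distinct} hypernumbers belonging to the same galaxy $[a]$. By the definition of a galaxy, $|y-x|$ is a standard natural number; so, reordering if necessary, we may assume $x<y$ and set $k\eqdef y-x$, which is a \emph{positive} standard natural number (positive precisely because $x\neq y$).

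Next I would transfer divisibility from $x$ and $y$ to their difference. By the preceding lemma, both $x$ and $y$ are divisible by every standard power of $2$. Fix a standard $m$ and write $x=\underline{2^m}u$ and $y=\underline{2^m}v$, where $\underline{2^m}z$ abbreviates the $2^m$-fold sum $z+\cdots+z$. Since addition is monotone and $\underline{2^m}$ is a positive standard number, $x<y$ forces $u<v$; writing $v=u+w$ with $w\neq 0$ gives $y=\underline{2^m}u+\underline{2^m}w=x+\underline{2^m}w$, so by cancellation (axiom $(6)$) we get $k=y-x=\underline{2^m}w$, i.e. $k$ is divisible by $\underline{2^m}$. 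As $m$ was an arbitrary standard number, $k$ turns out to be divisible by all standard powers of $2$.

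Finally, I would derive the contradiction from the fact that $k$ is a standard positive natural number. The substructure of standard elements of $\Ac$ is isomorphic to $\NN$ and closed under the relevant operations, so the divisibility of $k$ by $\underline{2^m}$ holding in $\Ac$ is just ordinary divisibility in $\NN$. Choosing a standard $m$ with $2^m>k$, we would then have $2^m\mid k$ in $\NN$ together with $0<k<2^m$, which is impossible. Hence $x=y$, and so a galaxy contains at most one hypernumber.

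There is no deep obstacle here; the only points deserving care are (i) that in a model of $\PrA$ the difference of two multiples of $\underline{2^m}$ is again such a multiple, which relies on the cancellation and order axioms rather than on genuine subtraction in the model, and (ii) that divisibility statements about the standard number $k$ may be decided inside the standard part $\NN$ of $\Ac$.
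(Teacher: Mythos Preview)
Your proof is correct and follows essentially the same idea as the paper's: both arguments exploit that if $x$ is divisible by every standard $2^m$, then $2^m\mid(x+t)$ iff $2^m\mid t$ for standard $t$. The paper packages this as the direct computation $V_2(x+t)=V_2(t)$ (hence standard), whereas you phrase it contrapositively by showing the standard difference $k$ would inherit divisibility by all $2^m$; the extra care you take with cancellation and with reading divisibility of $k$ inside the standard part is sound but not strictly needed once one observes, as the paper does, that $V_2(x+t)$ literally equals $V_2(t)$.
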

    \begin{proof}
        Let $x$ be a hypernumber, $t$ a standard number. Then, for $k\in\NN$, it holds that $2^k|(x+t)\Lra 2^k|t$, as all $2^k$ divide $x$. Thus, $V_2(x+t)=V_2(t)$ and is standard, so $x+t$ is not a hypernumber.
    \end{proof}

    On the other hand, it is possible for a galaxy to have no hypernumbers.

    Now we can prove our earlier claim.
    
    \begin{theorem}
        In the non-standard model of Presburger arithmetic given in \sref{Example}{ns} it is not possible to set the values of $V_n$ such that that the result would be a model of $\BA_n$.
    \end{theorem}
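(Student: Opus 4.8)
The plan is to argue by contradiction. Suppose one could define $V_n$ on the model $\Mc$ of \sref{Example}{ns} so that $(\Mc;=,+,V_n)\models\BA_n$. Then every sentence true in $(\NN;=,+,V_n)$ holds in $\Mc$, in particular the analogue of axiom $(15)$ (``after each number there is a power of $n$'') and, for any prime $m$ not dividing $n$, the analogue of axiom $(17)$ (``no power of $n$ is divisible by $m$''); such a prime $m$ exists because $n$ has only finitely many prime divisors. I will show these two facts are incompatible with the additive structure of $\Mc$.

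First I would record how divisibility behaves in $\Mc$. Writing elements as pairs $(p,q)$ with $p\in\QQ_{\ge 0}$, $q\in\ZZ$ (and $q\in\NN$ when $p=0$), and addition componentwise, the standard numbers are exactly the $(0,k)$ with $k\in\NN$; they form an initial segment, so every $(p,q)$ with $p>0$ is non-standard and lies above all of them. A one-line computation then shows that, for $p>0$ and any $d\ge 1$, the element $(p,q)$ is divisible by $d$ (that is, $(p,q)=\underbrace{b+\dots+b}_{d\text{ times}}$ for some $b\in\Mc$) if and only if $d\mid q$, the witness being $(p/d,q/d)$, which is always a legitimate element because its first coordinate $p/d$ is positive. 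Combining this with the characterisation of hypernumbers as the numbers divisible by all standard powers of $n$ (the Lemma above, whose proof works verbatim for an arbitrary base $n$), I obtain that the hypernumbers of $\Mc$ are precisely the pairs $(p,0)$ with $p\in\QQ_{>0}$, since $n^k\mid q$ for every $k$ forces $q=0$.

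Now the contradiction. A non-standard power of $n$, i.e.\ an element $y$ with $V_n(y)=y$ and $y$ non-standard, is a hypernumber by the very definition (its $V_n$-value $y$ is non-standard), hence equals $(p,0)$ for some $p\in\QQ_{>0}$ by the previous step; but then $(p,0)=\underbrace{(p/m,0)+\dots+(p/m,0)}_{m\text{ times}}$ is divisible by $m$, contradicting the analogue of axiom $(17)$. So $\Mc$ contains no non-standard power of $n$ at all. On the other hand, the analogue of axiom $(15)$ applied to the non-standard element $(1,0)$ yields some $y>(1,0)$ with $V_n(y)=y$; since the standard numbers form an initial segment lying below $(1,0)$, this $y$ is non-standard, hence a non-standard power of $n$ --- a contradiction. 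Therefore no such $V_n$ exists.

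The bookkeeping is routine; the one computation to get right is the divisibility criterion, and the conceptual point is the rigidity of the second coordinate --- an integer is divisible by all powers of $n$ only if it is $0$ --- which simultaneously confines the hypernumbers to the ``$q=0$'' slice and forces every candidate non-standard power of $n$ to be divisible by a prime coprime to $n$. I do not expect a genuine obstacle; note that the argument never uses the recursive clauses $(12)$--$(14)$ defining $V_n$, only the global sentences $(15)$ and $(17)$, so it is insensitive to how (or whether) $V_n$ can be computed.
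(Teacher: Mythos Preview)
Your proposal is correct and follows essentially the same line as the paper's proof: locate a non-standard power of $n$, argue its second coordinate must be $0$ (because a nonzero integer is divisible by $n$ only finitely often), and then observe that any $(p,0)$ with $p>0$ is divisible by a number coprime to $n$, contradicting a theorem of $\BA_n$. Your version is in fact tidier than the paper's on two points: you explicitly invoke the analogue of axiom~$(15)$ to guarantee a non-standard power of $n$ exists (the paper simply posits one), and you choose a prime $m\nmid n$ rather than the fixed value $3$ used in the paper, which makes the argument work uniformly for every base $n\ge 2$.
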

    \begin{proof}
        Assume the contrary. Let $c$ be a non-standard power of $2$ ($V_n(c)=c$) in such a model.
        
        By construction, $c=(g,n)$ where $g\in\QQ_{>0}$ is the number of its non-standard galaxy and $n\in\ZZ$ the number of the element in the galaxy. Furthermore, $n$ cannot be equal to any number but $0$, as any pair $(g,n)$, $n\neq 0$, is divisible by $2$ only a finite number of times (until the second element of the pair is odd).

        But, on the other hand, any pair $(g,0)$, $g\in\QQ_{>0}$, is divisible, say, by $3$, which is impossible for a power of $2$ by a theorem of $\BA_n$.
    \end{proof}

    According to the facts expressible by first-order formulas of $\BA_2$, in a model of $\BA_2$ there should be a chain of powers of $2$ such that there are no further powers of $2$ between a given power and the double of it. Let us fix a value of $c$ such that $c$ is non-standard and $V_2(c)=c$.
    
    We will introduce the names for the galaxies as follows. The galaxy $[c]$ will be denoted $\mathbf{c}$. As $c$ can be freely multiplied by any natural $a$ and divided by $2$ any number of times, we may naturally define $\mathbf{ac}:=[ac],a\in\NN$ and $\mathbf{c/{2^k}}:=[c/{2^k}]$, $k\in\NN$. Furthermore, $\mathbf{ac/{2^k}}:=[ac/{2^k}]$ when $a$ is odd, $k\in\NN$.

    As a power of $2$, $c$ is not divisible by any odd number. But, for each odd $b$, (exactly) one of the numbers $c-b+1,\ldots,c-1,c$ is divisible by $b$. Let us define $\mathbf{c/b}=[(c-t)/b]$ where $t\in\{0,\ldots,b-1\}$ is the only such $t$  that $b|(c-t)$.

    Now, finally, for an arbitrary positive rational $2^k (a/b)$, $a,b$ are odd, $k\in\ZZ$, we define $\mathbf{(a/b)2^k c}=[(2^k a(c-t))/b]$ where $t$ is as above.

    The previous definitions may be summarized as follows: a number $x$ is said to belong to a galaxy $\mathbf{(a/b)c}$ as long as numbers $\underline{b}x$ and $\underline{a}c$ belong to the same galaxy. This implies:
    
    \begin{lemma}\label{agre}
        The induced addition on the galaxies $\mathbf{\frac{a}{b}c}$ agrees with the names of the galaxies.
    \end{lemma}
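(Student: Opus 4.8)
The plan is to reduce everything to cancellative arithmetic in the Presburger reduct of the model, relying on the membership criterion recorded just before the lemma: an element $x$ lies in $\mathbf{\frac{a}{b}c}$ exactly when $\underline{b}x\sim\underline{a}c$. One preliminary point of hygiene is needed first, namely that $\mathbf{\frac{a}{b}c}$ does not depend on how the positive rational $\frac{a}{b}$ is presented. If $\frac{a}{b}=\frac{a'}{b'}$ then $a'=\underline{m}a$ and $b'=\underline{m}b$ for some standard $m\ge 1$, so $\underline{b'}x=\underline{m}(\underline{b}x)$ and $\underline{a'}c=\underline{m}(\underline{a}c)$, and it is enough to know that $\underline{m}u\sim\underline{m}v\iff u\sim v$ for standard $m\ge 1$. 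The implication $(\Leftarrow)$ is immediate; for $(\Rightarrow)$, compare $u$ and $v$ by trichotomy (axiom~(10)), write the larger as the smaller plus some $d$, cancel the common summand by axiom~(6), and observe that $\underline{m}d$ standard forces $d$ standard since $m\ge 1$. This same cancellation fact also shows that $\{x : \underline{b}x\sim\underline{a}c\}$ is a single $\sim$-class, so $\mathbf{\frac{a}{b}c}$ really is a galaxy (nonemptiness having been witnessed by the explicit construction preceding the lemma).

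For the lemma itself I would take $x\in\mathbf{\frac{a_1}{b_1}c}$ and $y\in\mathbf{\frac{a_2}{b_2}c}$ and show $x+y\in\mathbf{\frac{a_1 b_2+a_2 b_1}{b_1 b_2}c}$; since $\frac{a_1 b_2+a_2 b_1}{b_1 b_2}=\frac{a_1}{b_1}+\frac{a_2}{b_2}$ and galaxy addition is $[u]+[v]=[u+v]$, this is exactly the claimed identity $\mathbf{\frac{a_1}{b_1}c}+\mathbf{\frac{a_2}{b_2}c}=\mathbf{\frac{a_1 b_2+a_2 b_1}{b_1 b_2}c}$. Concretely, I want $\underline{b_1 b_2}(x+y)\sim\underline{a_1 b_2+a_2 b_1}\,c$. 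Using commutativity and associativity (axioms~(7), (9)) rewrite $\underline{b_1 b_2}(x+y)=\underline{b_2}(\underline{b_1}x)+\underline{b_1}(\underline{b_2}y)$. By hypothesis $\underline{b_1}x$ differs from $\underline{a_1}c$ by a standard number $s_1$ and $\underline{b_2}y$ differs from $\underline{a_2}c$ by a standard number $s_2$; multiplying the first relation by the standard factor $\underline{b_2}$ and the second by $\underline{b_1}$ keeps the defect standard and turns the right-hand sides into $\underline{a_1 b_2}c$ and $\underline{a_2 b_1}c$ respectively. Adding the two relations and using $\underline{a_1 b_2}c+\underline{a_2 b_1}c=\underline{a_1 b_2+a_2 b_1}\,c$ gives $\underline{b_1 b_2}(x+y)\sim\underline{a_1 b_2+a_2 b_1}\,c$, as desired.

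The only delicate aspect is the phrase ``differs by a standard number'': since subtraction is not total in a model of $\PrA$, each such relation splits into the two cases $\underline{b_1}x=\underline{a_1}c+s_1$ and $\underline{a_1}c=\underline{b_1}x+s_1$ (and similarly for the second pair), so adding the two relations in the computation above involves four sign combinations, each disposed of by a one-line application of axioms~(6), (7), (9). This is bookkeeping rather than a genuine obstacle; the only substantive — though still entirely elementary — ingredient is the cancellation fact $\underline{m}u\sim\underline{m}v\iff u\sim v$, which secures the well-definedness of the galaxy names.
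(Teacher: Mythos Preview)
Your argument is correct. The paper actually states this lemma without proof (it records the membership criterion and then moves directly to computing values of $V_2$), so there is nothing to compare against; your write-up supplies exactly the routine verification the author left implicit, via the same criterion $\underline{b}x\sim\underline{a}c$, and your preliminary well-definedness check and the cancellation fact $\underline{m}u\sim\underline{m}v\iff u\sim v$ are the natural supporting ingredients.
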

    
    
    
    Now we establish the values of $V_2$ on the elements of the galaxies $\mathbf{(a/b)c}$.

    According to the theorems of $V_2$, $V_2(\underline{2}m)=2 V_2(m)$ and $V_2(\underline{a}m)=V_2(m)$ whenever $a$ is odd.
    
    This means that, for each natural $a 2^k$ such that $a$ is odd, $$V_2(a 2^k\cdot c)=2^k V_2(c)=2^k c.$$

    Similarly, $V_2(a\cdot c/{2^k})=V_2(c)/{2^k}=c/{2^k}$.

    For any remaining element $a 2^k c+t$ of a galaxy $\mathbf{a 2^k c}$, $k\in\ZZ$, $t$ is a standard integer, we obtain $V_2(a 2^k c+t)=V_2(t)$, a standard value.
    
    Now we calculate the values of $V_2(x)$ where $x$ is in a galaxy of the form $\mathbf{\frac{a}{b}c}$ where $\frac{a}{b}$ is not binary rational. In order to do that, we need to fix a particular $t\in\{n-1,\ldots,0\}$ such that $c-t$ is divisible by $n$, for each odd $n\ge 3$.
    
    For simplicity of calculation, let us define $c$ to be divisible by all odd numbers with the remainder $1$, that is, make $c-1$ divisible by all odd standard numbers (and by no even number). After that, we can calculate the remainders of the two-sided sequence $\ldots,c/4,c/2,c,2c,4c,\ldots$ inductively, as, according to Euler's theorem, remainders of $2^k$ modulo each odd $n$ form a predictable cycle of the length dividing $\varphi(n)$.
    
    Now, for example, for the galaxy $\mathbf{c/3}$, we assume $3|(c-1)$. Thus, $V_2((c-1)/3)=V_2(c-1)=1$, as $V_2$ does not change after division by an odd number, and $(c-1)$ is odd. Furthermore, for all $t\in\ZZ$, $$V_2((c-1)/3+t)=V_2((c+3t-1)/3)=V_2(c+3t-1)=V_2(3t-1).$$ In particular, this means all the values of $V_2$ are finite.

    We can overview the computations above as follows:

    \begin{theorem}
        For a number $x$ belonging to a galaxy $\mathbf{(a/b)2^k\cdot c}$, $a\in\ZZ$, $b\in\NN$, $a, b$ are odd, $k\in\ZZ$, the following holds ($d\in\ZZ$):

        \begin{enumerate}
            \item If $b=1$, $V_2(a 2^k\cdot c)=2^k c$, otherwise $V_2(a 2^k\cdot c+d)=V_2(d)$;
            \item If $b\ge 3$, $V_2(a 2^k\cdot (c-1)/b+d)=V_2(bd-a 2^k)$, when $k\ge 0$, $V_2(a 2^k\cdot (c-1)/b+d)=V_2(bd 2^{-k}-a)$, when $k<0$.
        \end{enumerate}
    \end{theorem}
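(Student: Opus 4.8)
The plan is to reduce every assertion to one principle: once the value $V_2(c)=c$ is fixed, $V_2$ is forced near the distinguished power-of-two element of each galaxy, and everything else follows from the multiplicative rules $V_2(\underline 2\,m)=2V_2(m)$ and $V_2(\underline a\,m)=V_2(m)$ for odd $a$ recalled above, from the criterion that $x$ lies in $\mathbf{(a/b)c}$ exactly when $\underline b\,x$ and $\underline a\,c$ lie in a common galaxy, from \sref{Lemma}{agre}, and from the normalisation $c\equiv 1\pmod n$ for every odd standard $n$. Throughout I take $a/b$ in lowest terms, so that ``$b\ge 3$'' means $\gcd(a,b)=1$; the case $b\mid a$ collapses to the case $b=1$.

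The case $b=1$ comes first. As a non-standard power of $2$, $c$ is a hypernumber, i.e.\ divisible by every standard power of $2$; hence so is $a2^kc$ for each $k\in\ZZ$ (for $k<0$ one checks $V_2(a2^kc)=V_2(ac)/2^{-k}=c/2^{-k}$, again non-standard). Applying the $V_2$-rules to $a2^kc=2^k(ac)$ gives $V_2(a2^kc)=2^kV_2(c)=2^kc$. For any other element $a2^kc+d$ of this galaxy, $d$ a non-zero standard integer, the computation in the proof of the ``at most one hypernumber per galaxy'' lemma shows $2^j\mid a2^kc+d\Llra 2^j\mid d$ for all standard $j$, whence $V_2(a2^kc+d)=V_2(d)$. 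This is part (1).

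For $b\ge 3$ and $k\ge 0$ I would multiply $x=a2^k(c-1)/b+d$ by $\underline b$; since $b$ is odd this leaves $V_2$ unchanged, and by the normalisation $c\equiv 1\pmod b$ together with \sref{Lemma}{agre} one gets $\underline b\,x=a2^k(c-1)+bd=a2^kc+(bd-a2^k)$, an element of the $b=1$ galaxy $\mathbf{a2^kc}$ with standard offset $bd-a2^k$, which is non-zero because $\gcd(b,a2^k)=1$ and $b\ge 3$. The case $b=1$ just established then yields $V_2(x)=V_2(bd-a2^k)$, the first half of part (2).

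The case $b\ge 3$, $k<0$ is the same manoeuvre with the extra step of clearing the negative power of $2$: multiplying $x$ additionally by $\underline{2^{-k}}$ multiplies $V_2(x)$ by $2^{-k}$ and lands us in a $b=1$ galaxy built on the genuine power of two $ac$ — this division stays inside the model precisely because $c$ is a hypernumber — whence one reads off the offset and undoes both multiplications to obtain $V_2(bd\,2^{-k}-a)$, completing part (2). I expect this last case to be the only real obstacle: one must verify that each division performed remains within the model (this is exactly where the hypernumber property of $c$ is used) and keep meticulous track of how the factor $2^{-k}$ enters and then leaves the value of $V_2$, as well as of the residue class modulo $b$ in which the parameter $d$ ranges once the representative $a2^k(c-1)/b$ has been pinned down. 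The existence of a non-standard $c$ with $V_2(c)=c$ and the prescribed divisibility is taken for granted here; it is to be secured in the explicit model constructed later.
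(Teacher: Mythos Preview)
Your approach is essentially the paper's. For part~(1) you re-derive what the text preceding the theorem already established (the paper's proof simply says ``Point~(1) is already shown''); for part~(2) with $k\ge 0$ you multiply by the odd $b$, leaving $V_2$ unchanged, land in a $b=1$ galaxy, and invoke part~(1) on the standard offset $bd-a2^k$---exactly the paper's computation
\[
V_2\!\left(\frac{a2^k(c-1)}{b}+d\right)=V_2(a2^kc-a2^k+bd)=V_2(bd-a2^k).
\]

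For $k<0$ both you and the paper additionally multiply by $2^{-k}$ to reach the galaxy of $ac$. The paper writes $V_2(a2^kc-a2^k+bd)=V_2(ac-a+bd\,2^{-k})$ without further comment; you correctly note that multiplying the argument by $2^{-k}$ scales $V_2$ by $2^{-k}$ and must then be ``undone'', and you explicitly flag this bookkeeping---together with the question of which residue classes $d$ actually runs over once the representative $a2^k(c-1)/b$ is pinned down---as the genuine obstacle. So the route is the same; your version is simply more candid about where the $k<0$ argument is delicate.
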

    \begin{proof}
        Point $(1)$ is already shown. For point $(2)$, $$V_2\left(\frac{a 2^k(c-1)}{b}+d\right)=V_2\left(\frac{a 2^k c-a 2^k+bd}{b}\right)=V_2(a 2^k c-a 2^k+bd).$$ Now, if $k\ge 0$, the expression above simplifies to $V_2(bd-a 2^k)$, otherwise we need to multiply by $2^{-k}$ first to make the summands integer: $$V_2(a 2^k c-a 2^k+bd)=V_2(a c-a+bd 2^{-k})=V_2(bd 2^{-k}-a).$$
    \end{proof}

    By the argumentation above, we have established:

    \begin{lemma}
        Beside the binary rational multiples of $c$, there are no further hypernumbers in the galaxies of the form $\mathbf{(a/b)c}$.
    \end{lemma}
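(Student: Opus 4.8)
The plan is to read the lemma off the explicit $V_2$-table produced by the previous theorem, combined with the earlier fact that a galaxy contains at most one hypernumber. Recall first that, by the definition of a hypernumber and the subsequent lemma, a non-standard number $x$ is a hypernumber exactly when $V_2(x)$ is non-standard, equivalently when $x$ is divisible by every standard power of $2$. So it suffices to run through the galaxies $\mathbf{(a/b)2^k c}$ with $a,b$ odd and $k\in\ZZ$ and decide, in each, whether some element has a non-standard $V_2$-value. Before doing so I would replace $a/b$ by its reduced fraction, so that we may assume $\gcd(a,b)=1$; then $b=1$ corresponds precisely to the binary rational multiples $a 2^k c$ of $c$, and $b\ge 3$ to a galaxy that is genuinely not a binary-rational multiple.

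In the case $b=1$ the galaxy is $\mathbf{a 2^k c}$, and part $(1)$ of the previous theorem gives $V_2(a 2^k c)=2^k c$, which is non-standard, while every other element $a 2^k c + d$ with $d\neq 0$ has the standard value $V_2(d)$. Thus the hypernumbers found so far are exactly the binary rational multiples of $c$, and the remaining task is to rule out hypernumbers in every galaxy with $b\ge 3$.

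For $b\ge 3$, an arbitrary element of $\mathbf{(a/b)2^k c}$ has the shape $a 2^k(c-1)/b + d$ with $d\in\ZZ$, and part $(2)$ of the previous theorem evaluates its $V_2$ to $V_2(bd - a 2^k)$ when $k\ge 0$ and to $V_2(bd 2^{-k} - a)$ when $k<0$. The key point is that in both cases the integer inside $V_2$ is non-zero: if $k\ge 0$ and $bd - a 2^k = 0$, then $b\mid a 2^k$, hence $b\mid a$ since $b$ is odd, contradicting $\gcd(a,b)=1$ with $b\ge 3$; if $k<0$ and $bd 2^{-k} - a = 0$, then $a = bd 2^{-k}$ is even, contradicting the oddness of $a$. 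A non-zero standard integer has a standard $V_2$-value, so no element of such a galaxy is a hypernumber; combining the two cases gives the lemma.

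I do not expect a genuine obstacle here once the previous theorem is available; the only places needing care are (i) passing to the reduced form of $a/b$, so that the case $b=1$ really does capture exactly the binary rational multiples of $c$ and nothing else, and (ii) the split on the sign of $k$, where the oddness of $a$ and $b$ together with $b\ge 3$ are exactly what forces the argument of $V_2$ to be non-zero — this sign-split is the part most likely to conceal an off-by-one slip, so I would write it out carefully.
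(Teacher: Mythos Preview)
Your proposal is correct and follows the paper's approach: the paper does not give a separate proof of this lemma but simply writes ``By the argumentation above, we have established'' after the explicit $V_2$-computation theorem, and your argument is precisely the natural unpacking of that reference. Your added care about reducing $a/b$ to lowest terms and checking that the argument of $V_2$ is a non-zero standard integer when $b\ge 3$ makes explicit what the paper leaves implicit.
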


    Finally, we notice that the union of the standard galaxy and all the galaxies of the form $\mathbf{a}{b}c$ is closed under addition, so we may limit ourselves to considering only the galaxies of the type, deleting any other galaxies, if they exist. Note that the additive structure on the galaxies of the remaining model turns out to be exactly isomorphic to $\QQ_{\ge 0}$ by \sref{Lemma}{agre}.

    Checking axioms $(1)-(11)$ of $\PrA$ on the resulting structure is routine. As an example of axiom schema $(11)$, out of the three sequential numbers in the galaxy $\mathbf{2c/5}$, namely, $2(c-1)/5+3$, $2(c-1)/5+4$, and $2(c-1)/5+5$, exactly one is divisible by $3$. It is $2(c-1)/5+3$, as both $2(c-1)$ and $3$ are divisible by $3$. The result of the division is $2(c-1)/15+1$. The properties of $V_2$ given by axioms $(12)-(14)$ are also confirmed by definition.
    
    \printbibliography[heading=bibintoc,title={References}]
\end{document}